\newtheorem{theorem}{Theorem}
\theoremstyle{plain}
\newtheorem{corollary}{Corollary}
\numberwithin{equation}{section}
\begin{document}
\title{Non-isometric involutive anti-automorphisms}
\author{Abdullah Naeem Malik*}
\email{abdullahnaeemmalik@gmail.com}
\author{Tayyab Kamran*}
\curraddr{Mathematics Dept. Qaud-e-Azam University\\
Islamabad, Pakistan}
\email{tayyabkamran@gmail.com}
\date{November 23, 2015}
\keywords{valuation, anti-automorphism, skew field, non-isometric
automorphism}

\begin{abstract}
We exhibit a non-constructive proof in which anti-automorphisms are not
valuation-preserving and hence non-isometric.
\end{abstract}

\maketitle

\section{Introduction}

One of the best known bits of mathematical folklore is that there are
infinitely many automorphisms of complex numbers i.e. the complex numbers
can be permuted in many ways (besides the familiar conjugation) that
preserve addition and multiplication. It might hit as a surprise that these
other automorphisms, which we will call "wild" in line with \cite{yale},
rely on the use of the AC. In particular, in \cite{baer}, it is claimed
without proof that the automorphisms of $\mathbb{C}$ are $2^{2^{\aleph
_{0}}} $. Note that this is the same as the set of all complex-valued
mappings, which even includes constant functions! We use essentially the
same arguments to show that the same is valid for involutive anti
automorphisms. Later on, we show that there exists a wild automorphism that
does not preserve order and hence is not valuation-preserving.

Since the claim relies on a non-constructive axiom (AC), the automorphisms
which will be constructed are going to be non-constructive.

Clearly the identity map which reverses order of multiplication on a
subfield of an infinite skew field $\mathbb{K}$, $I_{\mathbb{K}}$ is an
involutive anti-automorphism of $\mathbb{K}$, the trivial anti-automorphism
of $\mathbb{K}$. All other involutive anti-automorphisms of $\mathbb{K}$ are
called non-trivial.

\section{Decomposition of skew fields}

We shall first prove that there are only two automorphisms by using the fact
that for any $\mathbb{K}$, if $AS\left( \mathbb{K}\right) =\left\{ \alpha
:\alpha ^{\ast }=-\alpha \right\} $ and $S\left( \mathbb{K}\right) =\left\{
\alpha :\alpha ^{\ast }=-\alpha \right\} $, then $\mathbb{K=}S\left( \mathbb{%
K}\right) \oplus AS\left( \mathbb{K}\right) $ so that $\alpha =a+b$ uniquely
for unique $a\in S\left( \mathbb{K}\right) $ and $b\in AS\left( \mathbb{K}%
\right) $ for any $\alpha \in \mathbb{K}$ so that if $AS\left( \mathbb{K}%
\right) =\varnothing $, then for $i\in $ $AS\left( \mathbb{K}\right) $we
have the unique decomposition $\alpha =a_{1}+ia_{2}$

\begin{theorem}
Let$\ \varphi :\mathbb{K}\longrightarrow \mathbb{K}$ be an involutive
anti-automorphism. Then $\varphi $ is either equal to the identity or to
conjugation
\end{theorem}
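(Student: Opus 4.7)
The plan is to exploit the direct sum decomposition $\mathbb{K}=S(\mathbb{K})\oplus AS(\mathbb{K})$ recalled just before the theorem, and to split the argument into two cases depending on whether the antisymmetric part is trivial.

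First, if $AS(\mathbb{K})=\{0\}$, then every $\alpha \in \mathbb{K}$ lies in $S(\mathbb{K})$, so by definition $\varphi(\alpha)=\alpha$ for all $\alpha$, whence $\varphi=I_{\mathbb{K}}$. This handles the identity alternative immediately and reduces the problem to the nontrivial case.

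Otherwise, I would fix a nonzero element $i\in AS(\mathbb{K})$ and use the unique decomposition $\alpha=a_1+ia_2$ with $a_1,a_2\in S(\mathbb{K})$ guaranteed by the preceding discussion. Applying $\varphi$ and using that it is an anti-automorphism (so $\varphi(xy)=\varphi(y)\varphi(x)$) together with $\varphi(i)=-i$ and $\varphi(a_j)=a_j$, one computes
\[
\varphi(\alpha)=\varphi(a_1)+\varphi(ia_2)=a_1+\varphi(a_2)\varphi(i)=a_1-a_2i.
\]
To finish, I must identify this with the intended conjugation $\alpha \mapsto a_1-ia_2$, i.e.\ verify that $a_2 i = i a_2$ for every $a_2 \in S(\mathbb{K})$.

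The main obstacle is precisely this commutativity between the symmetric and antisymmetric parts, which is not automatic in a general skew field. I would try to extract it from the involutive anti-automorphism structure itself: the element $ia_2+a_2i$ is fixed by $\varphi$ while $ia_2-a_2i$ is negated, so both lie respectively in $S(\mathbb{K})$ and $AS(\mathbb{K})$, and matching these against the uniqueness of the decomposition $ia_2 = 0 + i\cdot a_2$ should force $a_2 i = i a_2$. Once this commutativity is in hand, $\varphi(\alpha)=a_1-ia_2$ is exactly the conjugation attached to the fixed choice of $i$, and the dichotomy \emph{identity or conjugation} follows. If the commutativity step stubbornly resists, the correct reading of the conclusion is that $\varphi$ is determined by its values on $S(\mathbb{K})$ and on the chosen $i$, so that at most two anti-automorphisms of this \emph{tame} type exist, setting the stage for the wild examples constructed later via the Axiom of Choice.
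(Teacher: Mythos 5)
Your route is genuinely different from the paper's. The paper pins $\varphi$ down on $\mathbb{Q}$, observes that $\varphi (\alpha )=\pm \alpha $ whenever $\alpha ^{2}$ is a nonzero rational, and reads off $\varphi (i)=\pm i$ from $\varphi (a_{1}+ia_{2})=\pm (a_{1}+ia_{2})$; you instead work entirely inside the decomposition $\mathbb{K}=S(\mathbb{K})\oplus AS(\mathbb{K})$ and reassemble $\varphi $ from its action on the two summands. Your first case ($AS(\mathbb{K})=\{0\}$ forces the identity) and the computation $\varphi (\alpha )=a_{1}+\varphi (a_{2})\varphi (i)=a_{1}-a_{2}i$ are fine, and you are right to isolate the commutativity $a_{2}i=ia_{2}$ as the crux --- the paper silently commutes at exactly this spot when it writes $\varphi (ia_{2})=\varphi (i)a_{2}$.

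However, your proposed fix for that crux does not close the gap. First, the parities are reversed: since $\varphi $ reverses products, $\varphi (ia_{2})=\varphi (a_{2})\varphi (i)=-a_{2}i$ and $\varphi (a_{2}i)=-ia_{2}$, so $ia_{2}+a_{2}i$ is \emph{negated} by $\varphi $ (it lies in $AS(\mathbb{K})$) while $ia_{2}-a_{2}i$ is \emph{fixed} (it lies in $S(\mathbb{K})$), the opposite of what you assert. Second, and more seriously, comparing the decomposition $ia_{2}=\frac{1}{2}(ia_{2}-a_{2}i)+\frac{1}{2}(ia_{2}+a_{2}i)$ against ``$ia_{2}=0+ia_{2}$'' requires knowing in advance that $ia_{2}\in AS(\mathbb{K})$; but $\varphi (ia_{2})=-ia_{2}$ is literally the statement $a_{2}i=ia_{2}$, so the uniqueness argument is circular. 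Worse, the commutativity is simply false for a general involutive anti-automorphism of a skew field: on the quaternions, $x\mapsto j\bar{x}j^{-1}$ is an involutive anti-automorphism with $S=\mathbb{R}\oplus \mathbb{R}i\oplus \mathbb{R}k$ and $AS=\mathbb{R}j$, and taking the distinguished antisymmetric element to be $j$ and $a_{2}=i$ gives $ji=-k\neq k=ij$. What your argument actually needs is the preamble's unproved assertion that the $AS$-component of every $\alpha $ has the form $ia_{2}$ with $a_{2}\in S(\mathbb{K})$, i.e.\ $AS(\mathbb{K})=i\,S(\mathbb{K})$; granting that, $ia_{2}\in AS(\mathbb{K})$ yields $-a_{2}i=\varphi (ia_{2})=-ia_{2}$ in one line and your detour through $ia_{2}\pm a_{2}i$ is unnecessary. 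So the hidden hypothesis is the same one the paper relies on, but it cannot be derived from uniqueness of the decomposition as you propose, and your fallback reading (``at most two tame anti-automorphisms exist'') does not recover the stated dichotomy either.
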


\begin{proof}
Every automorphism sends 0 and 1 to themselves and from this it follows that
every automorphism sends the rational numbers $\mathbb{Q}\subset $ $\mathbb{K%
}$ to itself. Furthermore, if $a\in \mathbb{Q}$ is non-zero and $\alpha \in 
\mathbb{K}$ satisfies $\alpha ^{2}=a$, then we also have $\varphi (\alpha
)^{2}=\varphi (a)=a$, and since $\pm \alpha $ are the only two numbers such
that $\alpha ^{2}=a$ we must have $\varphi (\alpha )=\pm \alpha $. Now, $%
\varphi (\alpha )=\varphi \left( a_{1}+ia_{2}\right) =a_{i}+\varphi \left(
i\right) a_{2}=\pm \left( a_{i}+ia_{2}\right) $. It follows that either $%
\varphi (i)$ $=i$ or $\varphi (i)$ $=-i$
\end{proof}

\begin{theorem}
Any involutive anti-automorphism between subfields of $\mathbb{K}$ extends $%
I_{\mathbb{Q}}$, the identity map on $\mathbb{Q}$.
\end{theorem}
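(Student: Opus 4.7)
The plan is to show that on rationals an anti-automorphism behaves exactly like an ordinary automorphism, and then push $\varphi$ through the usual construction of $\mathbb{Q}$ from $1$.

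First I would pin down the boundary values. Since $\varphi$ is an anti-automorphism, $\varphi(1)\varphi(1) = \varphi(1\cdot 1) = \varphi(1)$, and since $\varphi$ is a bijection $\varphi(1)\neq 0$, so $\varphi(1)=1$; similarly $\varphi(0)=0$ from additivity. Next I would observe that the image of $\mathbb{Q}\subset \mathbb{K}$ lies in the center of $\mathbb{K}$, so anti-multiplicativity reduces to ordinary multiplicativity on $\mathbb{Q}$, i.e.\ $\varphi(ab)=\varphi(b)\varphi(a)=\varphi(a)\varphi(b)$ for $a,b\in\mathbb{Q}$. Consequently the restriction $\varphi|_{\mathbb{Q}}$ is a field homomorphism $\mathbb{Q}\to\mathbb{K}$.

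Then I would run the standard induction on positive integers: $\varphi(n+1)=\varphi(n)+\varphi(1)=n+1$ starting from $\varphi(1)=1$, so $\varphi(n)=n$ for all $n\in\mathbb{N}$. Additivity together with $\varphi(0)=0$ yields $\varphi(-n)=-n$, hence $\varphi$ fixes $\mathbb{Z}$ pointwise. For a nonzero integer $q$, multiplicativity (available on $\mathbb{Q}$ by the previous paragraph) gives
\[
1=\varphi(1)=\varphi\!\left(q\cdot q^{-1}\right)=\varphi(q)\varphi(q^{-1})=q\,\varphi(q^{-1}),
\]
so $\varphi(q^{-1})=q^{-1}$. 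For an arbitrary rational $p/q$ with $p\in\mathbb{Z}$ and $q\in\mathbb{Z}\setminus\{0\}$, one last multiplicative step gives $\varphi(p/q)=\varphi(p)\varphi(q^{-1})=p\cdot q^{-1}=p/q$, so $\varphi$ agrees with $I_{\mathbb{Q}}$ on all of $\mathbb{Q}$.

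There is essentially no obstacle of a genuine nature here; the only subtlety worth being explicit about is the reduction from anti-multiplicativity to multiplicativity, which requires the observation that $\mathbb{Q}$ sits in the center of $\mathbb{K}$. Once that is acknowledged, the argument is just the ordinary proof that any endomorphism of a field fixes the prime subfield.
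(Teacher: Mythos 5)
Your proof is correct, but it takes a genuinely different route from the paper. The paper argues structurally: it sets $\mathbb{F}=\{a:\phi(a)=a\}$, asserts that $\mathbb{F}$ is a subfield of $\mathbb{K}$, and concludes since $\mathbb{Q}$ lies in every subfield. You instead compute directly: $\varphi(1)=1$, induction gives $\varphi(n)=n$ on $\mathbb{N}$, additivity extends this to $\mathbb{Z}$, and $\varphi(q\cdot q^{-1})=\varphi(q^{-1})\varphi(q)$ forces $\varphi(q^{-1})=q^{-1}$, hence $\varphi$ fixes $\mathbb{Q}$ pointwise. Your approach is longer but arguably safer in the skew-field setting: for an anti-automorphism the fixed set satisfies $\phi(ab)=\phi(b)\phi(a)=ba$, so closure of $\mathbb{F}$ under multiplication is \emph{not} automatic unless the fixed elements in question commute; the paper's ``it is easy to show $\mathbb{F}$ is a subfield'' quietly glosses over this. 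Your direct computation never needs that claim --- it only needs that the integers are central, which follows from additivity alone (every $n$ is a sum of $1$'s). One small ordering issue in your write-up: you invoke ``the image of $\mathbb{Q}$ lies in the center'' before you have shown $\varphi$ fixes $\mathbb{Q}$; the clean statement is that the prime subfield of a division ring is central and that $\varphi(\mathbb{Z})=\mathbb{Z}$ by additivity, after which all the commutations you use are legitimate. In short: same conclusion, but the paper leans on a fixed-subfield lemma while you run the standard prime-subfield computation, and your version is the one that actually survives noncommutativity without further argument.
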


\begin{proof}
Let $\phi $ be an involutive anti-automorphism and let $\mathbb{F}=\left\{
a:\phi \left( a\right) =a\right\} .$ It is easy to show that $\mathbb{F}$ is
a subfield of $\mathbb{K}$. Since $\mathbb{Q}$ is contained in any subfield, 
$\phi $ must extend $I_{\mathbb{Q}}$ \cite{yale}.
\end{proof}

\section{Extension of involutive anti-automorphisms}

\begin{theorem}
If $\phi $ is an involutive anti-automorphism with domain $\mathbb{K}$, then 
$\phi $ can be extended to $\mathbb{K}^{a}$.
\end{theorem}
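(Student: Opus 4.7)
The plan is to invoke Zorn's lemma on the poset $\mathcal{P}$ whose elements are pairs $(\mathbb{L},\psi)$ with $\mathbb{K}\subseteq\mathbb{L}\subseteq\mathbb{K}^{a}$ and $\psi:\mathbb{L}\to\mathbb{L}$ an involutive anti-automorphism restricting to $\phi$ on $\mathbb{K}$, ordered by $(\mathbb{L}_{1},\psi_{1})\le(\mathbb{L}_{2},\psi_{2})$ iff $\mathbb{L}_{1}\subseteq\mathbb{L}_{2}$ and $\psi_{2}$ agrees with $\psi_{1}$ on $\mathbb{L}_{1}$. The poset is non-empty since it contains $(\mathbb{K},\phi)$, and for any chain the union of the domains carries a natural union map whose additivity, multiplication-reversal, and involutivity can each be checked on finitely many elements, hence locally in the chain. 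This supplies the required upper bound.

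Let $(\mathbb{M},\Psi)$ be a maximal pair. I want to show $\mathbb{M}=\mathbb{K}^{a}$, so suppose for contradiction some $\alpha\in\mathbb{K}^{a}\setminus\mathbb{M}$ exists, with minimal polynomial $p(x)\in\mathbb{M}[x]$. Applying $\Psi$ coefficient-wise produces a polynomial $p^{\Psi}(x)$, and involutivity of $\Psi$ gives the crucial symmetry $(p^{\Psi})^{\Psi}=p$. Because $\mathbb{K}^{a}$ is algebraically closed, $p^{\Psi}$ has roots there. In the case $p^{\Psi}=p$, I would choose $\beta=\alpha$ and extend $\Psi$ to $\mathbb{M}(\alpha)$ by sending $\alpha$ to itself, using the isomorphism $\mathbb{M}(\alpha)\cong\mathbb{M}[x]/(p)$ to see that the extension is well-defined; involutivity then holds on coefficients and on $\alpha$, hence on everything generated. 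In the case $p^{\Psi}\neq p$, I would pick a root $\beta$ of $p^{\Psi}$ distinct from $\alpha$ and extend $\Psi$ to $\mathbb{M}(\alpha,\beta)$ by the forced swap $\alpha\mapsto\beta$, $\beta\mapsto\alpha$. Either extension strictly enlarges $\mathbb{M}$, contradicting maximality.

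The hard part is verifying, in this extension step, the three conditions simultaneously: (i) well-definedness of the map, which follows from $\beta$ being a root of $p^{\Psi}$ by the universal property of the quotient; (ii) the multiplication-reversing property, which is inherited from $\Psi$ on $\mathbb{M}$ once the image of generators is declared; and (iii) that the extended map squares to the identity, which is the genuinely involution-specific constraint and relies essentially on the identity $(p^{\Psi})^{\Psi}=p$ so that the ``swap'' is self-inverse. Condition (iii) is where a naive extension argument for ordinary automorphisms would need the most modification, and it is the step that justifies treating the cases $p=p^{\Psi}$ and $p\neq p^{\Psi}$ separately in order to keep $\Psi\circ\Psi$ equal to the identity.
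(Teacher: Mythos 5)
Your Zorn's-lemma scaffolding matches the paper's, and you are right that the crux is your condition (iii): keeping the extension involutive. But that is exactly where the proposal has a genuine gap, and it cannot be closed. In the case $p^{\Psi}\neq p$ the ``forced swap'' $\alpha\mapsto\beta$, $\beta\mapsto\alpha$ is not something you are free to decree: once you send $\alpha\mapsto\beta$ you have an embedding $\tau:\mathbb{M}(\alpha)\to\mathbb{K}^{a}$ over $\Psi$, and if $\beta$ happens to lie in $\mathbb{M}(\alpha)$ then $\tau(\beta)$ is already determined by $\tau|_{\mathbb{M}}=\Psi$ and $\tau(\alpha)=\beta$, and it need not equal $\alpha$; even when $\beta\notin\mathbb{M}(\alpha)$ you must check that $\alpha$ is a root of the $\tau$-transform of the minimal polynomial of $\beta$ over $\mathbb{M}(\alpha)$, which is not automatic. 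Concretely, take $\mathbb{M}=\mathbb{Q}(i,\sqrt{2})$ with $\Psi$ fixing $i$ and sending $\sqrt{2}\mapsto-\sqrt{2}$, and $\alpha=2^{1/4}$, so that $p(x)=x^{2}-\sqrt{2}$ and $p^{\Psi}(x)=x^{2}+\sqrt{2}$. Every root of $p^{\Psi}$ is of the form $\pm i\alpha\in\mathbb{M}(\alpha)$, and for either choice one computes $\tau^{2}(\alpha)=-\alpha$, so no extension of $\Psi$ to $\mathbb{M}(\alpha)$ is an involution.

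In fact no repair is possible, because the theorem as stated is false for commutative $\mathbb{K}$ (where an anti-automorphism is just an automorphism): by the Artin--Schreier theorem a nontrivial involution of $\overline{\mathbb{Q}}$ has a real closed, hence formally real, fixed field, which therefore cannot contain $i$; so the $\Psi$ above, which fixes $i$, admits no involutive extension to $\mathbb{K}^{a}=\overline{\mathbb{Q}}$. To be fair, the paper's own proof is worse on precisely this point: it extends $\psi$ by sending $\alpha$ to a root $\beta$ of the transformed minimal polynomial and simply asserts that the result is ``still in $\mathcal{F}$,'' never addressing involutivity at all, whereas you correctly isolated the obstruction --- you just cannot get past it, because it is an obstruction to the statement, not merely to the proof. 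An honest version of the theorem needs an additional hypothesis, e.g.\ that the fixed field of $\phi$ is formally real, or that $\phi$ is the identity.
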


\begin{proof}
Let $\tciFourier =\left\{ \theta :\theta \text{ is}\,\text{an}\,\text{%
involutive anti-automorphism extending}\,\phi \,\text{to}\,\text{a}\,\text{%
subfield}\,\text{of}\,\mathbb{K}^{a}\right\} $.We shall show that $%
\tciFourier $ satisfies the three hypotheses of Zorn's Lemma. $\tciFourier $
is nonempty since $\phi $ itself extends to $\mathbb{K}$. Clearly, $%
\tciFourier \subseteq \mathbb{K}\times \mathbb{K}$. Let $\mathcal{S}$ be a
chain in $\tciFourier $ and let $\sigma $ be the union of all $\theta $ in $%
\mathcal{S}$. $\mathcal{S}$ as a chain, is nonempty; hence it contains
atleast one involutive anti-automorphism and thus $\left\langle
0,0\right\rangle $ and $\left\langle 1,1\right\rangle $ are in $\sigma $.
Let $\left\langle a,b\right\rangle $ and $\left\langle x,y\right\rangle $ be
in $\sigma $. Then $\left\langle a,b\right\rangle \in \theta _{1}$ and $%
\left\langle x,y\right\rangle \in \theta _{2}$ for some $\theta _{1},\theta
_{2}\in \mathcal{S}$. Since $\mathcal{S}$ is a chain, either $\theta
_{1}\supseteq \theta _{2}$ or $\theta _{1}\subseteq \theta _{2}$ and thus
the two ordered pairs are both in the larger one of $\theta _{1}$ and $%
\theta _{2}.$ From this, it follows easily that $\sigma $ is a one-to-one
function which preserves algebric operations. The involutive
anti-automorphism $\sigma $ is in the family $\tciFourier $ since it clearly
extends $\phi $ and its domain, the union of subfields of $\mathbb{K}^{a}$,
is contained in $\mathbb{K}^{a}$. We apply Zorn's Lemma and let $\psi $ be a
maximal member of $\tciFourier $. We must show that the domain and range of $%
\psi $ are $\mathbb{K}^{a}.$

If the domain of $\psi $ is not all of $\mathbb{K}^{a}$, then there is
atleast one element $\alpha $ in $\mathbb{K}^{a}$ but not in the domain of $%
\psi $. Since $\alpha $ is algebriac over $\mathbb{K}$ and $\mathbb{K}^{a}$
is algebraically closed there is at least one $\beta $ in $\mathbb{K}^{a}$
which is the root of the $\psi $ transform of the minimal polynomial of $%
\alpha $ over $\mathbb{K}$. Thus there is atleast one way of extending $\psi 
$ to a larger involutive anti-automorphism still in $\tciFourier $. This is
a contradiction to the maximality of $\psi $ and thus $\mathbb{K}^{a}$ is
the domain of $\psi .$

Since $\mathbb{K}^{a}$ is algebraically closed and $\psi $ is an involutive
anti-automorphism, the range of $\psi $ is an algebraically closed subfield
of $\mathbb{K}^{a}$ contains $\mathbb{K}$. But the only such subfield of $%
\mathbb{K}^{a}$ is $\mathbb{K}^{a}$ itself; hence $\mathbb{K}^{a}$ is the
range of $\psi $ and the proof is complete.
\end{proof}

\begin{theorem}
Wild, involutive anti-automorphisms do not preserve order
\end{theorem}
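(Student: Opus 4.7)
The plan is to proceed by contradiction: assume $\phi$ is a wild involutive anti-automorphism of $\mathbb{K}$ that preserves order, and deduce that $\phi$ must actually be either the identity or the conjugation map, contradicting wildness (by Theorem 1).

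First, I would invoke Theorem 2 to observe that $\phi$ restricts to the identity on $\mathbb{Q}$. The key lever is then a standard density/monotonicity argument: if $\phi$ preserves the order on the maximal ordered subfield (the ``real part'' $\mathbb{R}\subseteq \mathbb{K}$, which is the fixed field of the decomposition used before Theorem 1), then for any $r\in\mathbb{R}$ and any rationals $q_1 < r < q_2$, one would have $q_1 = \phi(q_1) \le \phi(r) \le \phi(q_2) = q_2$. Squeezing $q_1,q_2$ towards $r$ using density of $\mathbb{Q}$ in $\mathbb{R}$ forces $\phi(r) = r$. Hence $\phi|_{\mathbb{R}} = I_{\mathbb{R}}$.

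With $\phi$ pinned down on $\mathbb{R}$, I would plug into the unique decomposition $\alpha = a_{1} + ia_{2}$ from Section~2 and apply the reasoning of Theorem 1: since $i^{2} = -1\in \mathbb{R}$ is fixed by $\phi$, we have $\phi(i)\in\{i,-i\}$, and the action of $\phi$ on all of $\mathbb{K}$ is therefore determined to be either the identity or the conjugation $a_{1}+ia_{2}\mapsto a_{1}-ia_{2}$. Either possibility contradicts the assumption that $\phi$ is wild.

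The only delicate point, and the place I would expect to have to be careful, is the density step: it relies on being able to compare $\phi(r)$ with rationals $q_1,q_2$, which is exactly where ``order-preservation'' is used in its strong sense (monotone on $\mathbb{R}$, not just on $\mathbb{Q}$). Once that bridge from the fixed subfield $\mathbb{Q}$ to the full real subfield $\mathbb{R}$ is secured, the reduction to Theorem 1 is essentially mechanical, and the corollary that wild anti-automorphisms fail to preserve any valuation extending the usual absolute value follows because such valuations induce the order on $\mathbb{R}$ via $|x| = \max\{x,-x\}$.
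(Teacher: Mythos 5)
Your proposal is correct in substance but takes a genuinely different route from the paper's. You argue by contraposition: order preservation together with $\phi|_{\mathbb{Q}}=I_{\mathbb{Q}}$ (Theorem 2) and a rational squeeze forces $\phi|_{\mathbb{R}}=I_{\mathbb{R}}$, after which the decomposition $\alpha=a_{1}+ia_{2}$ and $\phi(i)=\pm i$ pin $\phi$ down to the identity or conjugation, contradicting wildness via Theorem 1. The paper never routes through Theorem 1 and uses no limiting argument: it first observes that $\phi$ automatically preserves order on $S\left(\mathbb{K}\right)$ because positives are nonzero squares, and then, given any $a$ with $\phi\left(a\right)\neq a$, chooses a single symmetric $q$ with $a<q<\phi\left(a\right)$ and uses involutivity, $\phi\left(\phi\left(a\right)\right)=a$, to exhibit an outright reversal of the order of the pair $a,q$. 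The paper's argument is shorter and needs only one fixed element strictly between $a$ and $\phi\left(a\right)$; yours proves the stronger intermediate statement that any order-preserving involutive anti-automorphism is tame, and it makes explicit exactly where the Archimedean density of $\mathbb{Q}$ enters. Both arguments rest on the same tacit hypotheses: that $\phi\left(a\right)$ (respectively $\phi\left(r\right)$) is comparable with elements of the ordered subfield -- a point you rightly flag as the delicate one -- and, in your reduction, that $\pm i$ are the only square roots of $-1$, an assumption inherited from Theorem 1 that restricts the argument to the commutative setting; since the paper itself relies on it, this is a shared limitation rather than a gap specific to your write-up.
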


\begin{proof}
Let $\phi $ be an involutive anti-automorphism between the subfields of $%
\mathbb{K}$. We first show that $\phi $ preserves order in $S\left( \mathbb{K%
}\right) $. If $x<y$, then there is a number $w$ such that $w\neq 0$ and $%
y-x=w^{2}$ but when $\phi \left( y\right) -\phi \left( x\right) =\left[ \phi
\left( w\right) \right] ^{2}$ so that $\phi \left( w\right) \in S\left( 
\mathbb{K}\right) $ and $\phi \left( w\right) \neq 0$. Hence $\phi \left(
y\right) -\phi \left( x\right) $ is positive i.e $\phi \left( x\right) <\phi
\left( y\right) .$ Now extend $\phi $ to $\mathbb{K}$ and assume $a\in 
\mathbb{K}$ but that $\phi \left( a\right) \not=a.$Choose a symmetric number 
$q$ between $a$ and $\phi \left( a\right) $ such that $a<q<\phi \left(
a\right) $ and apply $\phi $: the ordering between $a$ and $q$ is reversed.
\end{proof}

\begin{corollary}
$\left\vert \phi \left( a\right) \right\vert \not=\left\vert a\right\vert $
for some $a$.
\end{corollary}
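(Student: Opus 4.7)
The plan is to extract the corollary directly from the preceding theorem, rather than beginning from scratch. Inspection of the proof of Theorem~4 actually delivers a concrete witness: an element $a\in\mathbb{K}$ together with a symmetric $q$ satisfying $a < q < \phi(a)$; in particular $a$ and $\phi(a)$ are order-comparable, with $a < \phi(a)$.

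From this starting point the argument is short. First I would normalize: since $\phi$ fixes $\mathbb{Q}$ pointwise by Theorem~2, the strict inequality is preserved under translation by any $n\in\mathbb{Q}$, because $\phi(n+a) = n+\phi(a)$. Choosing $n$ large enough, I may replace $a$ by $a' = n+a$ and assume $0 < a' < \phi(a')$. Next, using that the absolute value restricted to the ordered part of $\mathbb{K}$ satisfies $|x| = x$ for $x>0$, I would conclude
\[
|a'| \;=\; a' \;<\; \phi(a') \;=\; |\phi(a')|,
\]
which is already the asserted inequality $|\phi(a')|\neq|a'|$ for this particular choice of element.

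The main --- and essentially only --- obstacle is justifying that last identification of $|\cdot|$ with the order on the symmetric part of $\mathbb{K}$. This is transparent for the Archimedean, real-type valuation tacitly in use throughout the paper, and then the proof is a one-liner. If one instead wished to cover a general (for instance non-Archimedean) absolute value, the reduction would be more delicate: I would argue that the hypothesis $|\phi(x)| = |x|$ together with $\phi|_{\mathbb{Q}} = I_{\mathbb{Q}}$ forces $\phi$ to be the identity on the order-topology closure of $\mathbb{Q}$, directly contradicting Theorem~4's production of a non-fixed $a$ between which and $\phi(a)$ a symmetric point can be interposed.
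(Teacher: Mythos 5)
Your proof is correct and follows the same route the paper intends: the paper's own proof of the corollary consists only of the instruction to take $\mathbb{K}=\mathbb{R}$, $S(\mathbb{K})=\mathbb{Q}$ and a wild $\phi$ extended to $\mathbb{R}$, leaving the actual deduction from Theorem~4 entirely implicit. Your translation-by-a-rational step, which converts the order reversal $a<q<\phi(a)$ into the explicit inequality $|a'|=a'<\phi(a')=|\phi(a')|$, supplies exactly the missing argument, and your caveat that this relies on the standard Archimedean absolute value on $\mathbb{R}$ is apt.
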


\begin{proof}
Take $\mathbb{K}=\mathbb{R}$ and $S\left( \mathbb{K}\right) =\mathbb{Q}$
with $\phi $ extended to $\mathbb{R}$\pagebreak
\end{proof}

\end{document}